\newtheorem{proposition}{Proposition}[section]
\newtheorem{lemma}{Lemma}[section]
\newtheorem{theorem}{Theorem}
\newtheorem{cor}{Corollary}[section]
\newtheorem{example}{Example}
\theoremstyle{plain}
\newtheorem*{kats}{Kats' Theorem}
\newtheorem{rem}{Remark}
\begin{document}

\title[Krein's inverse problem and continued fractions]{Continued fraction solution
of Krein's inverse problem}

\author{Yves Tourigny}
\address{School of Mathematics\\
        University of Bristol\\
        Bristol BS8 1TW, United Kingdom}
\ead{y.tourigny@bristol.ac.uk}

\begin{abstract}
The spectral data of a vibrating string are encoded in its so-called characteristic function.
We consider the problem of recovering the distribution of mass along the string from its characteristic function.
It is well-known that Stieltjes' continued
fraction leads to the solution of this inverse problem in the particular case where the distribution of mass
is purely discrete.
We show how to adapt Stieltjes' method to solve the inverse problem for a related class of strings. An application to the excursion theory of
diffusion processes is presented.
\end{abstract}

\ams{34A55, 65L09}
\submitto{\IP}

\section{Introduction}
Consider an inextensible string of unit tension placed along the $x$ axis with its left endpoint at $0$, and let $M(x)$ be the total mass of the string
segment from $0$ to $x$. The small vertical oscillations $y = y(x)$ of the string obey the equation
\begin{equation}
y'' + z M' y = 0
\label{stringEquation}
\end{equation}
where $z$ is the square of the oscillation frequency. The natural frequencies
of the string depend on how it is tied at the ends; suppose
\begin{equation}
y'(0-) = y(\ell) + (L-\ell) \,y'(\ell+) = 0
\label{boundaryCondition}
\end{equation}
where $\ell$ is the supremum of the points of increase of $M$, and $\ell \le L \le \infty$ is some ``tying constant''. 

Now, let $z \in {\mathbb R}_-$, and denote by
$\varphi(\cdot,z)$ and $\psi(\cdot,z)$ the two particular solutions of Equation (\ref{stringEquation}) satisfying
$$
\varphi(0,z) = \psi'(0-,z) = 1 \quad \mbox{and} \quad \varphi'(0-,z) = \psi(0,z) = 0\,.
$$
The {\em characteristic function} of the string is then defined by
\begin{equation}
W(z) = \lim_{x \rightarrow L} \frac{\psi(x,z)}{\varphi(x,z)}
\label{characteristicFunction}
\end{equation}
or, equivalently, 
\begin{equation}
W(z) = -\frac{w(0,z)}{w'(0-,z)}
\label{weylFunction}
\end{equation}
where $w(\cdot,z)$ is, up to an unimportant factor, the unique non-negative solution of the string equation that is decreasing and satisfies $w(L,z) = 0$.
The definition of $W$ may be extended to ${\mathbb C} \backslash {\mathbb R}_+$ by analytic continuation; $W$ is the analog of the Weyl--Titchmarsh function in the Sturm--Liouville theory.

The present paper is devoted to the inverse spectral problem for this vibrating string equation, namely the problem of recovering $M$ from $W$.
We call this {\em Krein's inverse problem}. We describe and illustrate a novel algorithm that is
applicable to a class of characteristic functions associated with a certain continued fraction. The remainder of this introduction provides a summary of
the approach and discusses its relationship with other works. 

\subsection{Krein's inverse problem}
\label{kreinSubsection}
In a series of papers published in the Soviet Union during the 1950's, M. G. Krein made a detailed study of the existence of spectral
expansions associated with the vibrating string equation. For some useful accounts of this work in the English language, see
\cite{DM,KK,KW}; in particular, \cite{KK} contains a rough outline of the
historical development of Krein's ideas and their overlap with the work of Feller aimed at
a unified analytical treatment of some discrete and continuous stochastic processes.

Denote by
${\mathbb M}$ the set of functions $M \,: [0,\infty] \rightarrow [0,\infty]$ that are non-decreasing, right-continuous and infinite
at infinity. We use the convention $M(0-)=0$ so that $d M (x)$ is a well-defined Stieltjes measure.
The tying constant $L$ is absorbed in the definition of $M$ by setting
$$
L := \sup \left \{ x \in {\mathbb R}_+ : \,M(x) < \infty \right \}\,.
$$
For such $M$, Krein gave a precise meaning to Equation (\ref{stringEquation}) and its particular solutions
$\varphi(\cdot,z)$, $\psi(\cdot,z)$ and $w(\cdot,z)$. With a slight abuse of terminology, we shall identify $M$ with the string.
Krein showed that the characteristic function defined by equation (\ref{characteristicFunction}) is necessarily
of the form
\begin{equation}
W(z) = c + \int_{[0,\,\infty)} \frac{d \sigma (\lambda)}{\lambda-z} = c + \frac{s}{-z} +  \int_0^\infty \frac{d \sigma (\lambda)}{\lambda-z}
\label{characteristicRepresentation}
\end{equation}
where $c \ge 0$, $\sigma$ is a right-continuous non-decreasing function with support in $[0,\infty)$ such that
\begin{equation}
\int_{[0,\infty)} \frac{d \sigma (\lambda)}{1+\lambda} < \infty
\label{kreinCondition}
\end{equation}
and $s = \sigma(0)$.
The paramater $c$ is in fact the infimum of the points of increase of $M$, and the function $\sigma$ in this representation is called the
{\em principal spectral function} of the string.
We shall denote by ${\mathbb W}$ the set of {\em all} functions expressible in this form. Hence Krein showed that $W \in {\mathbb W}$ for every
$M \in {\mathbb M}$. He also conjectured the converse: {\em every} element of ${\mathbb W}$ is the characteristic function of a unique string in ${\mathbb M}$.
To the best of our knowledge, the first proof of this conjecture was given in \cite{DM}, Chapter 6; it uses the
theory of Hilbert spaces of entire functions, and a deep uniqueness theorem due to de Branges. 

\subsection{The Sturm--Liouville problem in impedance form}
\label{sturmLiouvilleSubsection}
Although there is no known systematic procedure for solving
Krein's inverse problem in its full generality, effective algorithms do exist if the mass distribution belongs to certain special classes. 
For instance,
if $M$ is smooth, then the change of variables
$$
t = T(x) := \int_0^x \sqrt{M'(s)}\, d s\,, \;\; u(t) = y \left ( X(t) \right )
$$ 
where $X$ is the inverse of $T$, transforms the string equation into 
\begin{equation}
\frac{d}{d t} \left ( \varrho \, \frac{du}{dt} \right ) + z \,\varrho \, u = 0
\label{impedanceForm}
\end{equation}
where
\begin{equation}
\varrho  = \sqrt{M' \circ X}
\label{impedance}
\end{equation}
is the so-called impedance.
The inverse problem for this  equation may be tackled by  ``integral operator'' techniques going back to the works of Gelfand, Krein, Levitan and Marchenko; see \cite{Ma} for an informal account of these techniques and their historical
background, and \cite{AHM} for more recent developments.
General methods inspired by the Gelfand--Levitan approach aimed at solving the inverse problem for the string equation have been considered by
Dym \& Kravitsky \cite{DK1,DK2}, Winkler \cite{Wi} and Boumenir \cite{Bo}.

\subsection{The Stieltjes moment problem and the class ${\mathbb M}^+$}
\label{stieltjesSubsection}
Another class for which the inverse problem is tractable arises in connection with
{\em Stieltjes'  moment problem} \cite{St}:
given a sequence of numbers $c_0,\,c_1,\,\ldots$, find a measure $d \sigma$ on ${\mathbb R}_+$ such that
\begin{equation}
c_n = \int_0^\infty \lambda^n \,d \sigma(\lambda) \quad \mbox{for $n=0,\,1,\,\ldots$}\,.
\label{stieltjesMomentProblem}
\end{equation}
Briefly, Stieltjes' approach was to construct from the $c_j$ a certain continued fraction, namely
\begin{equation}
\frac{1}{\displaystyle - s_0 z + \frac{1}{\displaystyle s_1 + \frac{1}{\displaystyle -s_2 z + \cdots}}}\,.
\label{stieltjesContinuedFraction}
\end{equation}
Stieltjes gave an algorithm that permits the calculation of the $s_n$ from the $c_n$. The coefficients $s_n$ thus obtained form a finite
or infinite sequence of strictly positive numbers, depending on whether the measure $\sigma$ has finitely or infinitely many points of growth.
Stieltjes distinguished the {\em determinate} case, in which the moment problem has a unique solution,
and its complement, the {\em indeterminate} case. 
The determinate case arises if and only if the continued fraction either terminates (i.e. the sequence of the $s_n$ is finite) or else
is infinite but convergent. The continued fraction is then a well-defined function of $z$ which may be expressed as the
Stieltjes transform of the sought measure $\sigma$. Stieltjes also found a beautifully simple criterion for the convergence of the
continued fraction when the sequence of the $s_n$ is infinite; he showed that
convergence occurs if and only if
\begin{equation}
\sum_{n=0}^{\infty} s_n = \infty\,.
\label{stieltjesSeries}
\end{equation}

This work on the moment problem provided the inspiration for Krein's own theory of strings; see Supplement II in \cite{GK}.  With remarkable insight, Krein was able to relate
Stieltjes' continued fraction to the characteristic function of a {\em discrete} string, i.e. a string made up of a sequence
of point masses $m_j$ at the positions
$x_j$. To see how, set
$$
W(x,z) := -\frac{w(x,z)}{w'(x,z)}\,.
$$
The string equation then yields a Riccati equation for $W(\cdot,z)$:
$$
W'(x,z) = -1 - z M'(x) W^2(x,z)\,.
$$
For a string consisting of point masses, $M'$ is a linear combination of Dirac deltas; at a point $x_j$ with concentrated mass $m_j$, this Riccati equation
should be interpreted as
$$
W(x_j-,z) = W(x_j+,z) + z \,m_j W(x_j-,z) W(x_j+,z)\,.
$$
By
using Equation (\ref{weylFunction}), we find
\begin{equation*}
W(z) = W(0-,z) = x_0 + W(x_0-,z) = x_0 + \frac{1}{\displaystyle - m_0 z + \frac{1}{\displaystyle W(x_0+,z)}}
\end{equation*}
and, by iterating,
\begin{equation}
W(z)  =  x_0 + \frac{1}{\displaystyle - m_0 z + \frac{1}{\displaystyle x_1-x_0 + \cdots + \frac{1}{\displaystyle -m_n z + \frac{1}{\displaystyle  W(x_n+,z)}}}}\,.
\label{kreinExpansion}
\end{equation}
If the principal spectral function has finitely many points of increase, then the string consists of finitely many, say $n$, point masses with finite mass (and a further point of infinite mass at $L$). Then $W(x_n+,z) = L-\ell$, the expansion terminates, and we have the finite case
of Stieltjes' continued fraction (\ref{stieltjesContinuedFraction}). If the principal spectral function has infinitely many points of
increase, then there are infinitely many point masses; the continued fraction does not terminate
but its coefficients obviously still determine $M(x)$ for every $x$ such that
$$
x < \ell_0 := \lim_{n \rightarrow \infty} x_n \,.
$$
In particular, if
$$
\ell_0 + \sum_{j=0}^\infty m_j = \infty
$$
then the continued fraction converges and the string is {\em completely} determined by its coefficients since either 
$\ell_0 = \infty$ or else $L = \ell_0$.

Let us now elaborate the significance of the foregoing remarks for Krein's inverse problem. Define
\begin{equation}
{\mathbb W}^+ := \left \{ W \in {\mathbb W} :\, \int_0^\infty \lambda^n \, d \sigma (\lambda) < \infty \quad \mbox{for $n =0,\,1,\,2,\,\ldots$} \right \}
\label{Wplus}
\end{equation}
and denote by ${\mathbb M}^+$ the corresponding set of strings. Every $W \in {\mathbb W}^+$ may be expanded
in a continued fraction of the form (\ref{kreinExpansion}), and the coefficients in the expansion may be computed from the moments
of the principal spectral function $\sigma$. It will serve
our purpose to abuse Stieltjes' terminology somewhat by saying that $W$ itself belongs to the {\em determinate subclass} of ${\mathbb W}^+$
if the continued fraction is either finite or convergent, and to the {\em indeterminate subclass} otherwise.
It follows, then, that Stieltjes' continued fraction provides an effective solution method of the inverse problem for strings in ${\mathbb M}^+$
whose characteristic functions are in the determinate subclass. 

\begin{rem}
Not every member of ${\mathbb M}^+$ is a discrete string; all that can be said is that such strings must begin with a sequence
of point masses; see \cite{DM}, \S 5.9. If $W \in {\mathbb W}^+$ is in the determinate subclass, then $M$ is a discrete string. The converse is of course not true, and for a discrete
string whose characteristic function is in the indeterminate subclass, one cannot recover the tying constant from the continued fraction coefficients
alone; see \cite{KK}, \S 13.
\label{discreteRemark}
\end{rem}

\subsection{Main results and outline of the paper}
\label{resultsSubsection}
The connection between discrete strings and continued fractions can be exploited to provide a reconstruction algorithm of greater applicability. Beals \etal use this idea in the context of integrable systems \cite{BSS1,BSS2}; 
Borcea \etal use it as the basis of their numerical treatment of the inverse Sturm--Liouville problem in impedance form \cite{BDK}. 
The practical issue that arises is how
to find approximations of the spectral data that satisfy Stieltjes' moment condition and have explicitly computable moments. 
Both Beals \etal and Borcea \etal resolve this issue by restricting their attention
to the case of a {\em discrete spectrum}, where the characteristic function
has a Mittag--Leffler expansion which may be truncated to furnish the required approximations.

We present an alternative truncation strategy applicable to a class that includes some characteristic functions with an absolutely continuous spectrum.
This class is a proper subset of a class denoted ${\mathbb M}^-$, based on a continued fraction expansion
that arises naturally in connection with Krein's inverse problem. The expansion rests on the following
two useful ``rules'' relating the characteristic function to its string.
The first of these rules concerns the characteristic function of the right-continuous inverse $M^\ast$ of the string $M$:
\begin{proposition}
$$
W^\ast(z) := \frac{1}{-z W(z)}\,.
$$
\label{dualProposition}
\end{proposition}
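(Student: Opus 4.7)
My plan is to verify the identity directly from the representation $W(z) = -w(0,z)/w'(0-,z)$ in equation~(\ref{weylFunction}), by performing the change of variable that swaps position $x$ and cumulative mass $m = M(x)$.

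Fix $z \in \mathbb{R}_-$ and let $w(\cdot,z)$ be the non-negative decreasing solution of the string equation for $M$ entering~(\ref{weylFunction}). For $m \in [0,\infty)$, set
$$
v(m,z) \;:=\; \frac{w'\!\bigl(M^\ast(m)-,\, z\bigr)}{z}\,.
$$
The central step is to show that $v(\cdot,z)$ is, up to a positive constant, the analogous non-negative decreasing solution for the inverse string $M^\ast$. The string equation for $M$, written in distributional form as $d w'(x,z) = -z\, w(x,z)\,d M(x)$, together with the identity $d M \circ M^\ast (m) = d m$ valid on the support of the underlying measure, yields
$$
\frac{dv}{dm}(m,z) \;=\; -\,w\!\bigl(M^\ast(m),z\bigr).
$$
Differentiating once more, using $dx = d M^\ast(m)$ and $z\, v(m,z) = w'(M^\ast(m)-, z)$, one obtains
$$
d\!\left(\frac{dv}{dm}\right)(m,z) \;=\; -\,dw(M^\ast(m),z) \;=\; -\,w'(M^\ast(m)-,z)\, d M^\ast(m) \;=\; -\,z\, v(m,z)\, d M^\ast(m),
$$
which is precisely the string equation for $M^\ast$. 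Since $z<0$, $w \geq 0$ and $w' \leq 0$, the function $v$ is non-negative and non-increasing in $m$; the vanishing of $w$ at $L$ will translate into the appropriate vanishing of $v$ at the right endpoint of $M^\ast$.

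Granted this, formula~(\ref{weylFunction}) applied to $M^\ast$ reads $W^\ast(z) = -v(0,z)/v'(0-,z)$. The definition of $v$ gives $v(0,z) = w'(0-,z)/z$, and a short computation (see below) shows $v'(0-,z) = -w(0,z)$. Substituting and invoking $W(z) = -w(0,z)/w'(0-,z)$ collapses the expression to
$$
W^\ast(z) \;=\; -\,\frac{w'(0-,z)/z}{-w(0,z)} \;=\; \frac{1}{z}\cdot\frac{w'(0-,z)}{w(0,z)} \;=\; \frac{-1}{z\,W(z)} \;=\; \frac{1}{-z\,W(z)}\,,
$$
as required.

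The main obstacle is the endpoint bookkeeping. At $m=0$, the identity $v'(0-,z) = -w(0,z)$ has to be checked in the case where $M$ has an initial flat segment (so that $M^\ast$ carries an atom at $0$), by applying the jump relation for $v'$ at that atom together with the fact that $w$ is affine on the flat region of $M$; at the right endpoint, one must identify the tying constant $L^\ast$ for $M^\ast$ with the total mass of $M$ and verify that $v$ satisfies the correct tying condition, separating the subcases $L=\infty$, $L<\infty$ with no atom of $M$ at $L$, and $L<\infty$ with an atom of $M$ at $L$. Once these cases are dispatched, the proof is the one-line algebraic calculation displayed above.
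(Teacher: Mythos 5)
The paper does not actually prove Proposition~\ref{dualProposition}: it refers the reader to \cite{DM}, \S 6.8--6.9. Your argument is essentially the standard duality computation found there --- exchanging the roles of $x$ and $m=M(x)$ and observing that $w'(M^\ast(\cdot)-,z)/z$ solves the string equation for $M^\ast$ --- and the core of it is correct: the distributional manipulations giving $dv/dm=-w(M^\ast(m),z)$ and then $d(v')=-zv\,dM^\ast$ are right, the sign bookkeeping ($z<0$, $w\ge 0$, $w'\le 0$, hence $v\ge 0$ and $v'\le 0$) is right, and the final algebra collapsing $-v(0,z)/v'(0-,z)$ to $1/(-zW(z))$ is right, including the affine-segment computation showing $v'(0-,z)=-w(0,z)$ when $M$ has an initial flat piece.

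The one place where what you defer is more than bookkeeping is the right endpoint. Formula~(\ref{weylFunction}) does not characterise $w$ as merely ``non-negative and decreasing'': for a short string (one with $\ell+M(\ell-)<\infty$) there is a whole cone of non-negative decreasing solutions, and it is the tying condition $w(L,z)=0$ that singles out the correct one --- this is exactly the phenomenon behind the indeterminate case discussed in Remark~\ref{discreteRemark}. So to conclude $W^\ast(z)=-v(0,z)/v'(0-,z)$ you must actually prove, not just announce, that $L^\ast=M(\infty-)$ and that $v(L^\ast,z)=0$ (equivalently $w'(\ell-,z)=0$ when $M$ places its residual mass appropriately, with the three subcases you list treated separately). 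As written, your proof is a correct plan with this step genuinely outstanding; once it is supplied the argument is complete and coincides with the proof in the cited reference.
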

The string $M^\ast$ will be called the {\em dual} of $M$.
The second rule concerns the string obtained from $M$ by removing $0$ from the spectrum of $\sigma$:
\begin{proposition}
Let $M$ be the string with characteristic function
$$
W (z) = c + \int_{[0,\infty)} \frac{\mbox{\em d} \sigma (\lambda)}{\lambda-z}\,.
$$
Then
$$
\widehat{W} (z) := c + \int_0^\infty \frac{\mbox{\em d} \sigma (\lambda)}{\lambda-z} = W(z) + \frac{s}{z}
$$
is the characteristic function of the string $\widehat{M}$ defined by
$$
\widehat{M}(x) := \frac{M(t)}{1-\frac{M(t)}{M(\infty-)}}\,, \;\;\mbox{where}\; x = \int_0^t \left [ 1- \frac{M(\tau)}{M(\infty-)} \right ]^2\,\mbox{\em d} \tau\,.
$$
\label{zeroMassProposition}
\end{proposition}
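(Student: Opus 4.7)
The plan is to proceed in two steps: first verify that $\widehat{W}$ is itself a characteristic function, then identify the corresponding string with $\widehat{M}$. The first step is essentially free: the measure $d\sigma$ restricted to $(0,\infty)$ still obeys (\ref{kreinCondition}), so $\widehat{W}\in\mathbb{W}$ with representation (\ref{characteristicRepresentation}) having $s=0$. By Krein's correspondence there is a unique $\widetilde{M}\in\mathbb{M}$ with characteristic function $\widehat{W}$, and the task is to show $\widetilde{M}=\widehat{M}$.

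Set $m_\infty := M(\infty-)$. When $m_\infty=\infty$ the claim is trivial: the formula collapses to $\widehat{M}=M$ and one checks $s=0$. Assume therefore that $m_\infty<\infty$. The natural strategy is a Liouville--Darboux gauge transformation based on the positive decreasing function $\phi(t) := 1 - M(t)/m_\infty$, which vanishes at infinity. Combined with the change of variable $x = \int_0^t \phi(\tau)^2\,d\tau$, the identity
$$
\int_0^t \frac{dM(\tau)}{\phi(\tau-)\phi(\tau+)} = \frac{M(t)}{\phi(t)}\,,
$$
verified by direct telescoping at the atoms of $M$ and straightforward integration elsewhere, shows that the measure induced by the defining formula for $\widehat{M}$ coincides with $dM/(\phi(\cdot-)\phi(\cdot+))$ under the change of variable; in particular $\widehat{M}\in\mathbb{M}$.

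Let $w(\cdot,z)$ denote the non-negative decreasing solution of the string equation for $M$ satisfying $w(L,z)=0$. After the change of variable and an appropriate gauge transformation of $w$, the new wavefunction satisfies the string equation for $\widehat{M}$ with spectral parameter $z$. Boundary-value bookkeeping at $t=0$, exploiting the fact that $\phi$ solves only the \emph{modified} string equation $\phi''=-M'/m_\infty$ rather than the homogeneous equation at $z=0$, produces a precise defect in the relation between the Weyl ratios that yields $\widehat{W}(z)=W(z)+1/(m_\infty z)$; the constant $1/m_\infty$ is then matched with $s$ from the asymptotic $W(z)\sim -s/z$ as $z\to 0$, read off from (\ref{characteristicRepresentation}). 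A convenient alternative viewpoint uses Proposition \ref{dualProposition}: dualizing the identity $\widehat{W}=W+s/z$ yields the equivalent relation $1/\widehat{W}^*(z) = 1/W^*(z) - 1/m_\infty$, which mirrors the string-level identity $1/\widehat{M}(x) = 1/M(t) - 1/m_\infty$ hidden in the definition of $\widehat{M}$.

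The principal obstacle is executing the Darboux calculation rigorously for a general $M\in\mathbb{M}$: because $dM$ is only a Stieltjes measure, $\phi$ can have jumps, and the gauge transformation must be performed in measure-theoretic form, with the ``Darboux eigenfunction'' $\phi$ being only a quasi-solution at $z=0$. Equally delicate is the right-endpoint analysis as $t\to\infty$ where $\phi\to 0$: one must verify that $\widehat{M}$ has the correct infinite-mass behavior at the right endpoint $\widehat L := \int_0^\infty \phi(\tau)^2\,d\tau$, and that the decay $w(L,z)=0$ translates into the corresponding decay of $\hat w$ at $\widehat L$.
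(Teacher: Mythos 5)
The paper does not prove this proposition in-text; it refers the reader to Dym \& McKean, \S 6.8--6.9, so there is no argument of the author's to compare yours against line by line. Your outline does follow the standard route taken in that reference: a Liouville-type change of variable $x=\int_0^t\phi^2\,d\tau$ with $\phi:=1-M/M(\infty-)$, combined with the gauge $w\mapsto w/\phi$. Several of your ingredients are correct and worth keeping: the reduction to the case $M(\infty-)<\infty$ (via (\ref{atom}), since $M(\infty-)=\infty$ forces $s=0$ and $\widehat W=W$); the telescoping identity $\int_0^t dM/(\phi(\cdot-)\phi(\cdot+))=M(t)/\phi(t)$, which correctly identifies the measure $d\widehat M$ under the change of variable and shows $\widehat M\in\mathbb{M}$; and the dual reformulation $1/\widehat W^{\ast}=1/W^{\ast}-1/M(\infty-)$ obtained from Proposition \ref{dualProposition}, which neatly mirrors the pointwise identity $1/\widehat M=1/M-1/M(\infty-)$.

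The decisive step, however, is missing rather than merely compressed. You assert, but do not verify, that (a) the gauge-transformed function $\hat w(x,z)=w(t,z)/\phi(t)$ satisfies the string equation for $\widehat M$ in the integrated (measure-theoretic) sense; (b) it is the distinguished non-negative decreasing solution vanishing at the new right endpoint $\widehat L=\int_0^\infty\phi^2\,d\tau$ --- without which (\ref{weylFunction}) does not identify $\widehat W$ as the characteristic function of $\widehat M$; and (c) the bookkeeping at $t=0$ produces a defect equal to exactly $s/z=1/(zM(\infty-))$ and nothing else. These three computations \emph{are} the proof; your closing paragraph explicitly concedes that they constitute ``the principal obstacle,'' which makes the submission a proof plan rather than a proof. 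Two smaller points: the relation satisfied by $\phi$ is the first-order one $d\phi=-dM/M(\infty-)$ (your ``$\phi''=-M'/m_\infty$'' is off by one derivative and is not what drives the computation); and in step (a) the calculation must be carried out for Stieltjes measures, with the left/right limits $\phi(\tau\pm)$ placed correctly at the atoms, exactly as in your telescoping identity --- a uniform ``$\phi^2$'' in the denominator would be wrong at a jump. Completing (a)--(c), or citing Dym \& McKean \S 6.8--6.9 for them as the paper does, is required before the argument stands.
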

Two other useful formulae connecting $W$ and $M$ are:
\begin{equation}
M(\infty-) = \frac{1}{s}
\label{atom}
\end{equation}
and, in the case $\ell + M(\ell-) < \infty$,
\begin{equation}
L = c+\int_{[0,\infty)} \frac{d \sigma (\lambda)}{\lambda}\,.
\label{tyingConstant}
\end{equation}
For proofs of these results,
see \cite{DM}, \S 6.8 and \S 6.9. Using these rules and setting
$W_0(z) := W(z)$, we may write
\begin{equation*}
W_0 (z) = \frac{s_0}{-z} + \widehat{W}_0(z) = \frac{s_0}{-z} + \frac{1}{-z W_1(z)}
\end{equation*}
where $W_1$ is the characteristic function of the dual string, say $M_1$, of $\widehat{M}_0$. By the same argument,
\begin{equation*}
W(z) = \frac{s_0}{-z} + \frac{1}{\displaystyle s_1 + \frac{1}{\displaystyle W_2 (z)}}
\end{equation*}
where $W_2$ is the characteristic function of the dual string, say $M_2$, of $\widehat{M}_1$. Iterating, we obtain 
\begin{equation}
W(z) = \frac{s_0}{-z} + \frac{1}{\displaystyle s_1+\cdots+\frac{1}{\displaystyle \frac{s_{2n-2}}{-z}+ \frac{1}{\displaystyle -z W_{2n-1}(z)}}}\,.
\label{katsContinuedFraction}
\end{equation}
This continued fraction, albeit in a very particular context and in a somewhat disguised form, was introduced and studied in our recent work on the excursions of diffusion processes \cite{CT}.
It is also implicit in some of the calculations carried out by Donati--Martin and Yor in their generalisation 
of L\'{e}vy's formula for the area enclosed by a planar Brownian motion  \cite{DY}.

In \S \ref{katsSection}, we study the relationship between this continued fraction and that of Stieltjes. This relationship may be described in terms of the
map
\begin{equation}
W \mapsto W^{-}(z) := \frac{1}{-z} W^\ast \left ( \frac{1}{z} \right )\,.
\label{minusMap}
\end{equation}
Then
\begin{equation*}
W(z) = \frac{s_0}{-z} + \frac{1}{\displaystyle s_1+\frac{1}{\displaystyle \frac{s_2}{-z}+ \cdots}} \iff
W^-(z)  =  \frac{1}{\displaystyle - s_0 z + \frac{1}{\displaystyle s_1 + \frac{1}{\displaystyle -s_2 z + \cdots}}}\,.
\end{equation*}
Hence the class
\begin{equation}
{\mathbb W}^- := \left \{ W^- :\, W \in {\mathbb W}^+ \right \}
\label{Wminus}
\end{equation}
is the set of characteristic functions that plays, for the continued fraction (\ref{katsContinuedFraction}), the part played by the class
${\mathbb W}^+$ for  the continued fraction of Stieltjes. We show by elementary means that the corresponding set of strings, denoted ${\mathbb M}^-$, consists
of those strings whose {\em dual} principal spectral functions have {\em negative} moments.
Then, by a straightforward application of Kats' results on such spectral functions \cite{Ka}, we derive a simple criterion, expressed in terms of the mass distribution $M(x)$, for $M$ to be in the class ${\mathbb M}^-$.
The upshot is that this class is large enough to be of interest; in particular, it contains some strings
with an absolutely continuous principal spectral function.

Just as in the case of the Stieltjes continued fraction, ${\mathbb W}^-$ may be partitioned into two 
subclasses: determinate and indeterminate. 
The determinate subclass consists of all those characteristic functions for which the continued fraction expansion (\ref{katsContinuedFraction}) is either
finite or else convergent--- the latter case occuring if and only if Equation (\ref{stieltjesSeries}) holds. 
\S \ref{inverseSection} discusses the finite case. We show there that
the string $M$ corresponding to such a characteristic function is discrete with finitely many point masses, and we devise an algorithm to compute
it explicitly, given the continued fraction coefficients. The infinite (convergent) case may then be tackled by considering the truncations of the
continued fraction after $n$ terms; since Krein's correspondence between strings and characteristic functions is a homeomorphism,
the discrete string so obtained yields in the limit $n \rightarrow \infty$ the solution of the inverse problem.

In \S \ref{diffusionSection}, we give some examples that arise  in the study of the excursions of one-dimensional diffusion processes. Knight \cite{Kn} 
and Kotani \& Watanabe \cite{KW} pointed out simultaneously the relevance of Krein's theory to this topic:
every string $M$ defines a {\em generalised diffusion process}, and the excursions of the process from its starting point may be described by means of the principal sprectral function of the dual string. The probabilistic version of Krein's inverse problem is to find the diffusion, given the distribution of the excursion lengths. Donati--Martin and Yor solved this problem explicitly in a number of interesting cases \cite{DY1,DY2}, and we use
their findings to illustrate the effectiveness of our algorithm.

The paper ends with a few concluding remarks in \S \ref{conclusionSection}.

\section{The class ${\mathbb M}^-$}
\label{katsSection}

\begin{lemma}
For every $W \in {\mathbb W}$, $W^- \in {\mathbb W}$. Furthermore, the map
$$
W \mapsto W^{-}
$$
from ${\mathbb W}$ to itself is an involution.
\label{involutionLemma}
\end{lemma}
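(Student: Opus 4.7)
The plan is to first reduce $W^-$ to a much cleaner formula, from which the involution property becomes obvious, and then to verify membership in $\mathbb{W}$ by exhibiting an explicit Nevanlinna--type representation.

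First I would simplify the definition of $W^-$ using Proposition \ref{dualProposition}:
\begin{equation*}
W^-(z) \;=\; \frac{1}{-z}\,W^\ast\!\left(\tfrac{1}{z}\right) \;=\; \frac{1}{-z}\cdot\frac{1}{-\tfrac{1}{z}\,W(1/z)} \;=\; \frac{1}{W(1/z)}\,.
\end{equation*}
From this closed form the involution is immediate: $(W^-)^-(z) = 1/W^-(1/z) = 1/\bigl(1/W(z)\bigr) = W(z)$. So the entire substance of the lemma lies in showing that $W^- \in \mathbb{W}$.

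For membership in $\mathbb{W}$, I would not work with $1/W(1/z)$ directly, but rather with the intermediate form $W^-(z) = (-z)^{-1}W^\ast(1/z)$. By Proposition \ref{dualProposition}, $W^\ast$ is the characteristic function of the dual string, hence $W^\ast \in \mathbb{W}$ and admits the representation
\begin{equation*}
W^\ast(z) \;=\; c^\ast + \int_{[0,\infty)}\frac{d\sigma^\ast(\lambda)}{\lambda-z}\,.
\end{equation*}
Substituting $1/z$ for $z$, multiplying by $1/(-z)$, and separating the atom of $\sigma^\ast$ at the origin yields
\begin{equation*}
W^-(z) \;=\; \sigma^\ast(0)\;+\;\frac{c^\ast}{-z}\;+\;\int_{(0,\infty)}\frac{\lambda^{-1}\,d\sigma^\ast(\lambda)}{\lambda^{-1}-z}\,.
\end{equation*}
A change of variable $\mu = 1/\lambda$ in the last integral transports $\lambda^{-1}d\sigma^\ast(\lambda)$ on $(0,\infty)$ to a non-decreasing, right-continuous measure $d\sigma^-(\mu)$ on $(0,\infty)$, and the whole expression takes the required form
\begin{equation*}
W^-(z) \;=\; c^- + \int_{[0,\infty)}\frac{d\sigma^-(\mu)}{\mu-z}\,,
\end{equation*}
with constant $c^- = \sigma^\ast(0) \ge 0$ and with $\sigma^-$ on $[0,\infty)$ obtained by appending an atom of mass $c^\ast$ at $\mu=0$.

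The last thing to check is Krein's integrability condition \eqref{kreinCondition} for $\sigma^-$. Under the substitution $\mu = 1/\lambda$ one has $(1+\mu)^{-1}d\sigma^-(\mu) = (\lambda+1)^{-1}d\sigma^\ast(\lambda)$ on the positive axis, so
\begin{equation*}
\int_{[0,\infty)}\frac{d\sigma^-(\mu)}{1+\mu} \;=\; c^\ast + \int_{(0,\infty)}\frac{d\sigma^\ast(\lambda)}{\lambda+1}\;<\;\infty
\end{equation*}
because $\sigma^\ast$ itself satisfies \eqref{kreinCondition}. The only genuinely delicate step is the change of variable together with the correct bookkeeping: the atom of $\sigma^\ast$ at $0$ becomes the constant term of $W^-$, while the constant term of $W^\ast$ becomes the atom of $\sigma^-$ at $0$. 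Everything else is routine.
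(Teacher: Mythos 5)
Your proposal is correct and follows essentially the same route as the paper: represent $W^\ast$ via Krein's formula, substitute $1/z$, multiply by $(-z)^{-1}$, and change variables $\mu=1/\lambda$, observing that the constant term and the atom at the origin trade places while the integrability condition \eqref{kreinCondition} is preserved. Your preliminary reduction $W^-(z)=1/W(1/z)$ is a nice touch that makes the involution property transparent, where the paper merely asserts that it ``follows easily'' from the explicit representation.
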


\begin{proof}
Let $W \in {\mathbb W}$. Since $W^{\ast} \in {\mathbb W}$, we can write
$$
W^\ast (z) = c^\ast + \frac{s^{\ast}}{-z} + \int_0^\infty \frac{d \sigma^{\ast}(\lambda)}{\lambda-z}
$$
where $c^\ast \ge 0$, $s^{\ast} \ge 0$ and
$$
\int_0^\infty \frac{d \sigma^{\ast}(\lambda)}{\lambda+1} <\infty\,.
$$
A simple calculation shows that
$$
W^- (z) = s^\ast  + \frac{c^\ast}{-z} + \int_0^\infty \frac{d \sigma^- (\lambda)}{\lambda-z}
$$
where
\begin{equation}
d \sigma^- (\lambda) = \frac{d \sigma^\ast (1/\lambda)}{\lambda}\,.
\label{dualMeasure}
\end{equation}
It is then readily seen that
$$
\int_0^{\infty} \frac{d \sigma^- (\lambda)}{\lambda+1} = \int_0^\infty \frac{d \sigma^{\ast}(\lambda)}{\lambda+1}
$$
and so $W^- \in {\mathbb W}$. The fact that
$$
\left ( W^- \right )^- = W
$$
then follows easily.
\end{proof}

\begin{theorem}
$$
{\mathbb W}^- = \left \{ W \in {\mathbb W} :\, \int_0^\infty \lambda^{-n} \,d \sigma^\ast (\lambda) < \infty \quad \mbox{\em for $n =1,\,2,\,\ldots$} \right \}  \,.
$$
\label{momentTheorem}
\end{theorem}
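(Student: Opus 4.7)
The plan is to combine Lemma~\ref{involutionLemma} with the explicit pushforward formula derived in its proof. Since $W \mapsto W^-$ is an involution on $\mathbb{W}$, definition (\ref{Wminus}) simplifies to
$$
W \in \mathbb{W}^- \iff W^- \in \mathbb{W}^+\,,
$$
so the task reduces to translating the statement ``$W^- \in \mathbb{W}^+$'', which is about positive moments of the spectral measure of $W^-$, into a statement about $\sigma^*$, the spectral measure of $W^*$.

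The proof of Lemma~\ref{involutionLemma} already identifies the spectral measure of $W^-$ on $(0,\infty)$: it is the measure $\sigma^-$ given symbolically by $d\sigma^-(\lambda) = d\sigma^*(1/\lambda)/\lambda$. I would read this formula rigorously as the change-of-variables identity
$$
\int_0^\infty g(\lambda)\, d\sigma^-(\lambda) = \int_0^\infty \frac{g(1/\mu)}{\mu}\, d\sigma^*(\mu)\,,
$$
valid for every non-negative Borel function $g$; this is precisely what is invoked in the Lemma's proof to recognise $W^-$ as an element of $\mathbb{W}$. Taking $g(\lambda) = \lambda^n$ yields immediately
$$
\int_0^\infty \lambda^n\, d\sigma^-(\lambda) = \int_0^\infty \mu^{-(n+1)}\, d\sigma^*(\mu)\,.
$$

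From here the conclusion is immediate: $W^- \in \mathbb{W}^+$ requires the left-hand side to be finite for every integer $n \ge 0$, which by the identity above is equivalent to $\int_0^\infty \mu^{-m}\, d\sigma^*(\mu) < \infty$ for every integer $m \ge 1$, exactly the condition in the theorem.

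The heavy lifting is really done in Lemma~\ref{involutionLemma}; once its content --- both the involution property and the identification of $\sigma^-$ --- is in hand, the theorem is a bookkeeping exercise. The only subtle point I would flag is that $d\sigma^-(\lambda) = d\sigma^*(1/\lambda)/\lambda$ is to be understood as the pushforward/change-of-variables relation above rather than as a pointwise equality of densities, since $\sigma^*$ can have atoms or a singular continuous part; the density calculation in the proof of Lemma~\ref{involutionLemma} remains valid in this more general sense by a standard approximation argument.
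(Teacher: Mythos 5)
Your proposal is correct and follows essentially the same route as the paper: reduce via the involution to the condition $W^- \in \mathbb{W}^+$ and then translate the positive moments of $\sigma^-$ into negative moments of $\sigma^*$ using the relation $d\sigma^-(\lambda) = d\sigma^*(1/\lambda)/\lambda$ from the proof of Lemma~\ref{involutionLemma}. You merely make explicit the change-of-variables computation that the paper leaves as "a consequence of Equation~(\ref{dualMeasure})", which is a harmless (indeed helpful) elaboration.
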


\begin{proof}
The fact that $W \mapsto W^-$ is an involution implies
$$
{\mathbb W}^- := \left \{ W^- :\, W \in {\mathbb W}^+ \right \} = \left \{ W \in {\mathbb W} :\, W^- \in {\mathbb W}^+ \right \}\,.  
$$
Note that,
by definition of ${\mathbb W}^+$, $W^- \in {\mathbb W}^+$ if and only if
$$ 
\int_0^\infty \lambda^n \, d \sigma^- (\lambda) < \infty \quad \mbox{for $n=0,\, 1,\, \ldots$}
$$
The theorem is then a consequence of Equation (\ref{dualMeasure}).
\end{proof}
\begin{cor}
$$
M \in {\mathbb M}^- \iff M^\ast \in {\mathbb M}^-\,.
$$
\label{dualCorollary}
\end{cor}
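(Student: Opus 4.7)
The plan is to reduce the equivalence to a $\ast$-invariance property of the class $\mathbb{W}^+$, by exploiting the fact that the two involutions $\ast$ and $-$ commute on $\mathbb{W}$. By Lemma~\ref{involutionLemma} together with the definition~(\ref{Wminus}), $W\in\mathbb{W}^-$ iff $W^-\in\mathbb{W}^+$. Combined with Proposition~\ref{dualProposition}, this gives $M\in\mathbb{M}^- \iff W^-\in\mathbb{W}^+$ and, with $M^\ast$ in place of $M$, $M^\ast\in\mathbb{M}^- \iff (W^\ast)^-\in\mathbb{W}^+$. A short computation from Proposition~\ref{dualProposition} and (\ref{minusMap}) yields the commutation identity $(W^\ast)^-(z) = W(1/z)/(-z) = (W^-)^\ast(z)$. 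Setting $U:=W^-$ and using that $U$ ranges over all of $\mathbb{W}$ by involutivity of $-$, the Corollary collapses to the single claim
\[ U\in\mathbb{W}^+ \iff U^\ast\in\mathbb{W}^+. \]

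The main obstacle is to establish this $\ast$-invariance of $\mathbb{W}^+$. My plan is to treat the purely discrete case by direct inspection of the Krein continued-fraction expansion~(\ref{kreinExpansion}). For a discrete $M$ with $W_M=U\in\mathbb{W}^+$, the positive coefficients of the continued fraction of $U$ alternate between the positions $x_j$ and the masses $m_j$. Dualization replaces $M$ by the discrete string whose atom positions are the partial sums $\sum_{k\le j}m_k$ and whose atom masses are the positional increments $x_{j+1}-x_j$, so that the expansion for $U^\ast$ has exactly the same shape with the two interleaved subsequences of coefficients interchanged. Stieltjes' convergence criterion~(\ref{stieltjesSeries}) is symmetric in the two subsequences, so the expansion for $U^\ast$ terminates or converges precisely when the one for $U$ does; hence $U^\ast\in\mathbb{W}^+$ iff $U\in\mathbb{W}^+$ in the discrete case.

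The non-discrete members of $\mathbb{M}^+$ (mentioned in Remark~\ref{discreteRemark}) are recovered by approximation, using the homeomorphism property of Krein's correspondence between $\mathbb{M}$ and $\mathbb{W}$: truncating $U$ to its first $n$ continued-fraction coefficients produces a discrete string for which the swap is transparent, and the discrete-case equivalence above passes to the limit. The delicate point---and the part that would require the most care in the write-up---is to verify that the dualization operation commutes with this limiting procedure in the right topology on $\mathbb{M}$, so that the $\ast$-invariance established at the discrete level persists in full generality.
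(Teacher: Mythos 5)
Your reduction is exactly the paper's: both arguments use the commutativity of the involutions $\ast$ and $-$ to collapse the Corollary to the single claim that $\mathbb{W}^+$ (equivalently $\mathbb{M}^+$) is invariant under dualization, and your computation $(W^\ast)^- = (W^-)^\ast$ is correct. Where you diverge is in how that invariance is established, and there your argument has a genuine gap. First, membership in $\mathbb{W}^+$ is a moment condition, equivalent to the \emph{existence} of an expansion of the form (\ref{kreinExpansion}) with positive coefficients, not to its convergence; Stieltjes' criterion (\ref{stieltjesSeries}) characterises the \emph{determinate} subclass. So your ``terminates or converges precisely when\dots hence $U^\ast\in\mathbb{W}^+$ iff $U\in\mathbb{W}^+$'' step only covers determinate $U$, and says nothing about the indeterminate discrete strings, which do belong to $\mathbb{M}^+$. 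Second, and more seriously, the limiting argument for the non-discrete members of $\mathbb{M}^+$ cannot work in the form you propose: Krein's correspondence is a homeomorphism for locally uniform convergence of characteristic functions, equivalently vague convergence of the spectral measures, and finiteness of all moments is not preserved under vague limits --- a sequence of finitely supported measures can converge vaguely to a measure with an infinite first moment. So the invariance ``established at the discrete level'' does not pass to the limit by any topological argument; the point you flag as delicate is not merely delicate but fatal to that route.

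The paper's own proof is a one-liner resting on a structural fact rather than on approximation: every string in $\mathbb{M}^+$, discrete or not, begins with a sequence of point masses (Remark \ref{discreteRemark}), its spectral function solves the Stieltjes moment problem read off from the interleaved sequence of gaps and masses, and the dual of such a string again begins with a sequence of point masses in which the two interleaved coefficient subsequences are interchanged; hence $M\in\mathbb{M}^+$ iff $M^\ast\in\mathbb{M}^+$ with no limiting procedure. Your observation about the interchange of the two subsequences is precisely the right ingredient; what is missing is to combine it with the Kac--Krein characterisation of $\mathbb{M}^+$ (rather than with convergence of the fraction, or with a limit of discrete strings) to turn it into the invariance statement.
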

\begin{proof}
The dual of a discrete string is a discrete string. Hence
$$
M \in {\mathbb M}^+ \iff M^\ast \in {\mathbb M}^+\,.
$$
Furthermore, the maps 
$$
M \mapsto M^- \quad \mbox{and} \quad M \mapsto M^\ast
$$
commute.
\end{proof}

Next, we seek a simple criterion--- in terms of the mass distribution--- for $M$ to belong to the class ${\mathbb M}^-$. We shall need the following
key result, due to Kats \cite{Ka}:

\begin{kats}
For the principal spectral function $\sigma$ of a string $M \in {\mathbb M}$ to satisfy
$$
\int_{[0,\infty )} \lambda^{-n} \,d \sigma (\lambda ) < \infty \quad \mbox{for every $n = 1,\,2,\,\ldots$}
$$
it is necessary and sufficient that $L<\infty$ and that 
$$
\forall \; \varepsilon > 0, \;M(x) \left [L-x\right ]^{1 + \varepsilon} = o(1) \quad \mbox{as $x \rightarrow L-$}\,.
$$
\end{kats}

\begin{theorem}
$M \in {\mathbb M}^-$ if and only if either
\begin{enumerate}
\item $M(\infty-) < \infty$ and $\forall \; \varepsilon > 0, \;\lim_{x \rightarrow \infty} x \left [ M(\infty-) - M(x) \right ]^{1+\varepsilon} = 0$ \\
or 
\item $L < \infty$ and $\forall \; \varepsilon > 0, \;\lim_{x \rightarrow L-} M(x) \left  [ L-x \right ]^{1+\varepsilon} = 0$.
\end{enumerate}
\label{stringTheorem}
\end{theorem}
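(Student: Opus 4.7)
The plan is to combine Theorem~\ref{momentTheorem}, Corollary~\ref{dualCorollary}, and Kats' Theorem. First, Theorem~\ref{momentTheorem} gives the characterization $M \in \mathbb{M}^-$ if and only if $\int_0^\infty \lambda^{-n}\,d\sigma^\ast(\lambda) < \infty$ for every $n \geq 1$. Applying the same theorem to $M^\ast$ and using Corollary~\ref{dualCorollary} together with $(M^\ast)^\ast = M$ yields a second, equivalent characterization: $M \in \mathbb{M}^-$ if and only if $\int_0^\infty \lambda^{-n}\,d\sigma(\lambda) < \infty$ for every $n \geq 1$. Having both available will be essential, because the atoms at zero must be handled with care.

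Indeed, by \eqref{atom}, $\sigma$ carries an atom at $0$ of mass $s = 1/M(\infty-)$ while $\sigma^\ast$ carries an atom $s^\ast = 1/L$ at $0$. The integrals above, being over the open interval $(0,\infty)$, ignore these atoms, whereas Kats' Theorem concerns $\int_{[0,\infty)}$. Hence Kats' criterion can be applied directly to $\sigma$ only when $L<\infty$ (which forces $s=0$) and to $\sigma^\ast$ only when $M(\infty-)<\infty$ (which forces $s^\ast=0$). These two regimes together are exhaustive for membership in $\mathbb{M}^-$: if both $L=\infty$ and $M(\infty-)=\infty$, then neither of Kats' hypotheses holds and both moment integrals diverge, so $M\notin\mathbb{M}^-$, and correspondingly neither (i) nor (ii) is possible.

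I would then split into two substantial cases. When $L<\infty$, Kats' Theorem applied to $\sigma$ reduces the moment condition directly to condition (ii). When instead $M(\infty-)<\infty$ (so $L=\infty$ automatically), one has $L^\ast = M(\infty-) < \infty$ and $s^\ast = 0$, and Kats' Theorem applied to $\sigma^\ast$ yields the intermediate tail condition
\[
M^\ast(x)\bigl[M(\infty-)-x\bigr]^{1+\varepsilon} = o(1) \quad\text{as } x \to M(\infty-)-\,,
\]
which must still be translated into the stated condition~(i).

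The translation is the principal obstacle. I would rely on the generalized-inverse relation $M(y-) \leq x \leq M(y)$ whenever $y = M^\ast(x)$. One direction is immediate: taking $x = M(y)$ and using $M^\ast(M(y)) \geq y$ gives $y[M(\infty-) - M(y)]^{1+\varepsilon} \leq M^\ast(x)[M(\infty-)-x]^{1+\varepsilon} = o(1)$ as $y \to \infty$. The converse uses a brief squeeze: since $M(y-) \geq M(y/2)$ for $y > 0$,
\[
M^\ast(x)\bigl[M(\infty-)-x\bigr]^{1+\varepsilon} \leq y\bigl[M(\infty-)-M(y/2)\bigr]^{1+\varepsilon} = 2 \cdot (y/2)\bigl[M(\infty-)-M(y/2)\bigr]^{1+\varepsilon},
\]
which tends to $0$ by condition~(i) applied at $y/2 \to \infty$. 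This elementary $y/2$ trick absorbs any jumps or flat parts of $M$ near infinity and completes the reduction.
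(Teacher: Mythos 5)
Your proposal is correct, and its skeleton is the same as the paper's: Theorem \ref{momentTheorem}, Corollary \ref{dualCorollary} (so that the negative-moment criterion is available for both $\sigma$ and $\sigma^\ast$), and Kats' Theorem, with the dichotomy between conditions (i) and (ii) governed by which of $M(\infty-)$ and $L$ is finite. Your bookkeeping of the atoms at the origin via (\ref{atom}) --- $s=1/M(\infty-)$, $s^\ast=1/L$, so that the open-interval integrals of Theorem \ref{momentTheorem} match the closed-interval integrals in Kats' Theorem precisely in the two relevant regimes, and both diverge when $L=M(\infty-)=\infty$ --- is a slightly different but equivalent way of organising the case split; the paper instead notes that $s=0\Rightarrow s^\ast>0$ within ${\mathbb M}^-$ and then literally ``replaces $M$ by $M^\ast$'' in the equivalence it proves for condition (ii). The one place where you add genuine content is the final translation: the paper's substitution silently identifies condition (ii) for $M^\ast$, i.e.\ $M^\ast(x)\left[M(\infty-)-x\right]^{1+\varepsilon}=o(1)$ as $x\to M(\infty-)-$, with the stated condition (i) for $M$, and your sandwich $M(y-)\le x\le M(y)$ for $y=M^\ast(x)$ together with the $y/2$ squeeze is a correct, complete way to do what the paper leaves to the reader; the flexibility in $\varepsilon$ is not even needed, only monotonicity. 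One micro-gap: in the converse direction of that translation your squeeze invokes condition (i) ``at $y/2\to\infty$'', which presumes $y=M^\ast(x)\to\infty$; if the support of $dM$ is bounded (i.e.\ $\ell<\infty$ while $L=\infty$) then $M^\ast(x)$ stays bounded as $x\to M(\infty-)-$ and the conclusion is trivial, so you should record that case separately.
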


\begin{proof}
We note first that, for $M \in {\mathbb M}^-$, the implication 
$$
s=0 \Rightarrow s^\ast >0
$$ holds because
$W^-$ has an expansion of the form (\ref{kreinExpansion}). Furthermore $M(\infty-) = \frac{1}{s}$.  Hence 
we have the trivial identity
$$
M \in {\mathbb M}^{-} \iff \left [ M \in {\mathbb M}^{-} \;\mbox{and}\; M(\infty-) < \infty \right ] \;\mbox{or}\; \left [ M \in {\mathbb M}^{-} \;\mbox{and}\; M(\infty-) = \infty  \right ] 
$$
$$
\iff \left [ M \in {\mathbb M}^{-} \;\mbox{and}\; M(\infty-) < \infty \right ] \;\mbox{or}\; \left [ M^\ast \in {\mathbb M}^{-} \;\mbox{and}\; M^\ast(\infty-) < \infty  \right ]
\,.
$$
Let us show first that 
\begin{equation}
\left [ M^\ast \in {\mathbb M}^{-} \;\mbox{and}\; M^\ast(\infty-) < \infty \right ] \;\iff\; \mbox{Condition (ii) holds}\,.
\label{firstHalf}
\end{equation}
Suppose that  $M^{\ast} \in {\mathbb M}^{-}$ and  $M^{\ast}(\infty-) < \infty$. Then
$$
\ell = L = M^{\ast}(\infty-) < \infty
$$
and so
$$
s = 0\,.
$$
It follows from Corollary \ref{dualCorollary} that
$$
\int_{[0,\,{\infty})} \lambda^{-n} \,d \sigma(\lambda) < \infty \quad \mbox{for $n=1,\,2,\,\ldots$}\,. 
$$
Kats' Theorem then implies that
$$
\forall \; \varepsilon > 0\,, \quad M(x) \left [ L-x\right ]^{1 + \varepsilon} = o(1) \quad \mbox{as $x \rightarrow L-$}\,.
$$
This is Condition (ii). Conversely, if Condition (ii) holds,
we arrive by a similar argument at the conclusion that $M^{\ast} \in {\mathbb M}^{-}$ and  $M^\ast(\infty-) < \infty$. This proves (\ref{firstHalf}). 
To complete the proof of the theorem, it suffices to replace $M$ by $M^\ast$ in (\ref{firstHalf}).
\end{proof}

\begin{rem}
The problem of characterising
the {\em determinate subclass} in terms of the mass distribution appears to be much more difficult.
\label{criterionRemark}
\end{rem}

\begin{example}
Let $u(\cdot,z)$ be the particular
solution of Equation (\ref{impedanceForm}) in the interval $[0,1]$ such that
$$
u'(0,z)=-1 \;\;\mbox{and} \;\; u(1,z) = 0\,.
$$
The problem considered by Borcea \etal is to recover $\varrho$, normalised by $\varrho(0)=1$, from $u(0,z)$ \cite{BDK}. Now, 
suppose that
$$
L := \int_0^1 \frac{d \tau}{\varrho (\tau)} < \infty \;\;\mbox{and}\;\; \int_0^1 \varrho (\tau) \,d \tau < \infty\,.
$$
Then $u(0,z)$ is the characteristic function of a string $M$ such that
$$
\ell = L < \infty \;\;\mbox{and} \;\; M(L-) < \infty\,.
$$
It is an easy consequence of Theorem \ref{stringTheorem} that this string belongs to ${\mathbb M}^-$.
Knowing $M$, $\varrho$ can in principle be obtained via Equation (\ref{impedance}).

Given the continued fraction coefficients $s_n$, one can ascertain whether $W$ is in the determinate class
by using Stieltjes criterion (\ref{stieltjesSeries}). To give a more concrete example, consider the case
$$
\varrho \equiv 1\,.
$$
Then
$$
W(z) = \frac{\mbox{\rm th} \left ( \sqrt{-z} \right )}{\sqrt{-z}} = 
\frac{1}{\displaystyle 1 + \frac{1}{\displaystyle \frac{3}{-z} + \frac{1}{\displaystyle 5 + \cdots}}}\,.
$$
The series
$$
\sum_{n=0}^\infty s_n = \sum_{j=0}^\infty \left ( 2 j+1 \right )
$$
diverges, and so $W$ is in the determinate class.

\label{impedanceExample}
\end{example}

\section{The inversion algorithm}
\label{inverseSection}

We proceed to compute the string corresponding to the finite case of the continued fraction expansion on the right-hand
side of Equation (\ref{katsContinuedFraction}). In what follows, it will always be assumed that
$$
s_0 \ge 0 \;\;\mbox{and} \;\; s_j > 0 \;\mbox{if $j >0$}\,.
$$
We shall use the compact notation
$$
W[s_j,\,\ldots,\,s_{j+2 k}] := \frac{s_j}{-z} + \frac{1}{\displaystyle s_{j+1}+ \frac{1}{\displaystyle \frac{s_{j+2}}{-z}+ \cdots + \frac{1}{\displaystyle s_{j+2k-1}+ \frac{1}{\displaystyle \frac{s_{j+2k}}{-z}}}}}
$$
and
$$
W[s_j,\,\ldots,\,s_{j+2k+1}] := \frac{s_j}{-z} + \frac{1}{\displaystyle s_{j+1}+ \frac{1}{\displaystyle \frac{s_{j+2}}{-z}+ \cdots + \frac{1}{\displaystyle s_{j+2k-1}+ \frac{1}{\displaystyle \frac{s_{j+2k}}{-z}+\frac{1}{\displaystyle s_{j+2k+1}}}}}}
$$
and an analogous notation for the corresponding strings. These characteristic functions are in the determinate subclass of ${\mathbb W}^-$
because their images under the map $W \mapsto W^-$ are in the determinate subclass of ${\mathbb W}^+$.

\begin{theorem}
For every $k > 0$,
$$
M[s_k] = \frac{1}{s_k}
$$
and, for every $0 \le j < k$,
\begin{equation}
M [ s_j,\, \ldots,\,s_{k} ] (x) = \frac{M^{\ast}[s_{j+1},\,\ldots,\,s_{k}](t)}{1+ s_j M^{\ast}[s_{j+1},\,\ldots,\,s_{k}](t)}
\label{basicRecurrenceRelation}
\end{equation}
where
\begin{equation}
x = \int_0^t \left \{ 1+ s_j \,M^{\ast}[s_{j+1},\,\ldots,\,s_{k}](\tau) \right \}^2\,d \tau\,.
\label{xInTermsOft}
\end{equation}
\label{recurrenceTheorem}
\end{theorem}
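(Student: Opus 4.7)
The plan is to combine two ingredients: Proposition~\ref{zeroMassProposition} run backwards, and a purely algebraic identity between continued fractions. The base case $M[s_k]=1/s_k$ follows because $W[s_k](z)=s_k/(-z)$ corresponds under Krein's bijection to a string with $c=0$, a single atom of mass $s_k$ at $\lambda=0$, and no further spectrum; by (\ref{atom}) and a direct substitution into the string equation, this string is the constant function $1/s_k$ on $[0,\infty)$ (with $L=\infty$).

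For $j<k$ the key claim is the continued-fraction identity
$$
\widehat{W[s_j,s_{j+1},\ldots,s_k]}(z) \;=\; W^{\ast}[s_{j+1},\ldots,s_k](z).
$$
To prove it, I set $F_m:=W[s_m,\ldots,s_k]$ and introduce the companion $G_m$, defined as the same finite continued fraction but with the parity convention swapped, so that its first coefficient $s_m$ appears without the factor $-z$. The recursive structure yields $F_m = s_m/(-z) + 1/G_{m+1}$ and $G_m = s_m + 1/F_{m+1}$, with base $F_k=s_k/(-z)$ and $G_k=s_k$, so that $G_k = -z F_k$. A downward induction then gives $G_m = -z F_m$ for every $j\le m\le k$: assuming $G_{m+1} = -z F_{m+1}$, one computes
$$
-z F_m \;=\; -z\cdot\frac{s_m}{-z} + \frac{-z}{G_{m+1}} \;=\; s_m + \frac{1}{F_{m+1}} \;=\; G_m.
$$
Taking $m=j+1$ and noting that the atom at $0$ in $W[s_j,\ldots,s_k]$ has mass $s_j$, we obtain
$$
\widehat{W[s_j,\ldots,s_k]}(z) \;=\; W[s_j,\ldots,s_k](z) + \frac{s_j}{z} \;=\; \frac{1}{G_{j+1}} \;=\; \frac{1}{-z F_{j+1}} \;=\; W^{\ast}[s_{j+1},\ldots,s_k](z),
$$
which is the claim.

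By the uniqueness half of Krein's correspondence this identity lifts to the strings themselves, giving $\widehat{M[s_j,\ldots,s_k]} = M^{\ast}[s_{j+1},\ldots,s_k]$. To finish I would invert Proposition~\ref{zeroMassProposition}. Writing $\mu=M(\infty-)$ and solving $\widehat{M} = \mu M/(\mu-M)$ for $M$ gives $M = \widehat{M}/(1+\widehat{M}/\mu)$, while the substitution inverts to $t=\int_0^x(1+\widehat{M}/\mu)^2\,d\xi$ because $1-M/\mu = 1/(1+\widehat{M}/\mu)$. Substituting $\mu=1/s_j$ (from (\ref{atom}) applied to $W[s_j,\ldots,s_k]$, whose atom at $0$ has mass $s_j$) and $\widehat{M}=M^{\ast}[s_{j+1},\ldots,s_k]$ produces (\ref{basicRecurrenceRelation})--(\ref{xInTermsOft}) at once.

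The main obstacle is the continued-fraction identity itself: it is the only step not internal to a single string, and the alternation between coefficients of the form $s_m/(-z)$ and $s_m$ must be bookkept carefully. Once the auxiliary family $G_m$ is introduced, however, the induction collapses to a one-line manipulation and the remainder of the argument becomes a mechanical substitution into the inverted zero-mass transformation.
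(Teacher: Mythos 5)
Your proof is correct and follows essentially the same route as the paper: both reduce the recurrence to the continued-fraction identity $\widehat{W}[s_j,\ldots,s_k]=W^{\ast}[s_{j+1},\ldots,s_k]$, lift it to the strings via Krein's correspondence, and then run Proposition~\ref{zeroMassProposition} backwards. The only difference is presentational: where the paper asserts the identity as an ``easy calculation'' after splitting into the cases $k-j$ odd and even, you prove it by a downward induction on the auxiliary family satisfying $G_m=-zF_m$, which neatly avoids the parity case split.
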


\begin{proof}
Consider the case where $k-j$ is {\em odd}. Then
$$
W[s_j,\,\ldots,\,s_k] = \frac{s_j}{-z} + \frac{1}{\displaystyle s_{j+1}+\frac{1}{\displaystyle \frac{s_{j+2}}{-z}+ \cdots+ \frac{1}{\displaystyle \frac{s_{k-1}}{-z}+\frac{1}{\displaystyle s_k}}}}\,.
$$ 
An easy calculation shows that
$$
\left ( \widehat{W} \right )^\ast [s_j,\,\ldots,\,s_k] = W[s_{j+1},\, \ldots,\,s_k]\,.
$$
By taking the dual on both sides, we obtain
$$
\widehat{W}[s_j,\,\ldots,\,s_k] = W^\ast [s_{j+1},\, \ldots,\,s_k]\,.
$$
It then follows from Proposition \ref{zeroMassProposition} that
$$
M^\ast [s_{j+1},\,\ldots,\,s_k] (x) = \frac{M[s_j,\,\ldots,\,s_k](t)}{1 - s_j M[s_j,\,\ldots,\,s_k](t)}\,,
$$
where
$$
x = \int_0^t \left \{ 1- s_j M[s_j,\,\ldots,\,s_k](\tau) \right \}^2 d \tau\,.
$$
When we ``turn this around'' and express $M[s_j,\,\ldots,\,s_k]$ in terms of $M^\ast [s_{j+1},\,\ldots,\,s_k]$ and $t$ in terms of $x$, we
obtain the desired result.  The case where $k-j$ is even is analogous.
\end{proof}

\begin{cor}
For every $k>0$ and every $0 \le j <k$, $M[s_j,\,\ldots,\,s_k]$ is a discrete string.
\end{cor}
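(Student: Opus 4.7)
My plan is to argue by induction on the length $k-j$ of the tail of the continued fraction, using Theorem \ref{recurrenceTheorem} as the engine. The base case $j=k$ is immediate: $M[s_k] \equiv 1/s_k$ is, modulo the convention $M(0-)=0$, a single point mass of size $1/s_k$ located at $x=0$, which is a discrete string.

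For the inductive step, I would assume that $M[s_{j+1},\ldots,s_k]$ is discrete. As noted in the proof of Corollary \ref{dualCorollary}, the dual of a discrete string is again discrete; hence $M^{\ast}[s_{j+1},\ldots,s_k]$ is discrete. I then apply the recurrence (\ref{basicRecurrenceRelation})--(\ref{xInTermsOft}) of Theorem \ref{recurrenceTheorem}. The key observation is that the map
$$
F(u) := \frac{u}{1+s_j u}
$$
is continuous and strictly increasing on $[0,\infty]$, and therefore carries a step function in $t$ to a step function in $t$ with exactly the same set of discontinuities. Meanwhile, since $s_j M^{\ast}[s_{j+1},\ldots,s_k](\tau)$ is piecewise constant (and bounded below by $0$), the map $t \mapsto x(t)$ defined by (\ref{xInTermsOft}) is continuous, strictly increasing, and affine on every interval on which $M^{\ast}[s_{j+1},\ldots,s_k]$ is constant. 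Hence $t \mapsto x(t)$ is a homeomorphism from the support of $M^{\ast}[s_{j+1},\ldots,s_k]$ onto the support of $M[s_j,\ldots,s_k]$, sending jump points to jump points and intervals of constancy to intervals of constancy. Composing with $F$ then exhibits $M[s_j,\ldots,s_k]$ as a step function in $x$ with finitely many jumps, i.e.\ as a discrete string.

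The only delicate point, and therefore the main thing to verify carefully, is the behaviour at the rightmost atom, where $M^{\ast}[s_{j+1},\ldots,s_k]$ may become infinite and where the integral (\ref{xInTermsOft}) could in principle diverge. For a finite continued fraction this situation corresponds to the tying constant of the dual string, and a short case analysis (parity of $k-j$, together with the identities $M(\infty-)=1/s$ and (\ref{tyingConstant})) shows that the limit $t \rightarrow \ell^{\ast}-$ of $x(t)$ is precisely the position at which $M[s_j,\ldots,s_k]$ either terminates with an infinite atom or extends with the appropriate finite tying constant. Once this is checked, the inductive step is complete and the corollary follows.
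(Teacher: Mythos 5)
Your proposal is correct and follows essentially the same route as the paper: a backward induction on $j$ using the recurrence (\ref{basicRecurrenceRelation})--(\ref{xInTermsOft}) of Theorem \ref{recurrenceTheorem}, with the observation that $t\mapsto x(t)$ is increasing, continuous and piecewise linear while $u\mapsto u/(1+s_j u)$ preserves step functions. Your extra care about the dual of a discrete string being discrete and about the rightmost (possibly infinite) atom merely makes explicit points the paper leaves implicit.
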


\begin{proof}
Let $k > 0$ and proceed by induction on $j$, starting with $j=k-1$ and going backwards. 
The claim is obviously true for $j=k-1$.
Suppose that $M[s_{j+1},\,\ldots,\,s_k]$ is a discrete string. By Equation (\ref{xInTermsOft}), $x$ is then an increasing continuous piecewise linear function
of $x$,
and the fact that $M[s_{j},\,\ldots,\,s_k]$ is piecewise constant is an obvious consequence of Equation (\ref{basicRecurrenceRelation}).
\end{proof}

This result permits the efficient recovery of the discrete string $M[s_0,\,\ldots,\,s_n]$ via the sequence
\begin{equation}
M [s_{n}] \rightarrow M [s_{n-1},\,s_n] \rightarrow \cdots \rightarrow M[s_0,\,\ldots,\,s_{n}]\,.
\label{recurrenceRelation}
\end{equation}
Each term in that sequence is a right-continuous piecewise constant function with finitely many jumps, so we need
only keep track of the location, say $x_j$, of these jumps, and of the value, say $y_j$, of the function there.
We always include $x_0=0$ and $y_0$, whether or not $y_0 > 0$.
We have
$$
M [s_n](x) = 
\frac{1}{s_n} \;\;\mbox{for every $x \ge 0$}\,.
$$
So we set
\begin{equation}
x_0^{(0)} = 0 \;\; \mbox{and}\;\; y_0^{(0)} = \frac{1}{s_n}\,.
\label{firstIteration}
\end{equation}

Also
$$
M[s_{n-1},s_n](x) = \cases{
0 & \mbox{if $0 \le x < \frac{1}{s_n}$} \\
\frac{1}{s_{n-1}} & \mbox{if $x \ge \frac{1}{s_n}$}
}
$$
So we set
\begin{equation}
x_0^{(1)} = 0,\; x_1^{(1)} = \frac{1}{s_n}\;\;\mbox{and}\;\; y_0^{(1)} = 0,\;y_1^{(1)} = \frac{1}{s_{n-1}}\,.
\label{secondIteration}
\end{equation}
Then, for $k \ge 1$,
we have the recurrence relations
\begin{equation}
x_0^{(2k)} = 0\,.
\label{firstGeneralStartingValueForX}
\end{equation}
\begin{equation}
x_j^{(2k)} = x_{j-1}^{(2k)} + \left [ 1+s_{n-2k} \,x_j^{(2k-1)} \right ]^2 \left [ y_j^{(2k-1)}-y_{j-1}^{(2k-1)}\right ],\; 1 \le j \le k\,.
\label{firstGeneralFormulaForX}
\end{equation}
\begin{equation}
y_j^{(2k)} = \frac{x_{j+1}^{(2k-1)}}{1+s_{n-2k}\,x_{j+1}^{(2k-1)}}\,,\; 0 \le j < k\,.
\label{firstGeneralFormulaForY}
\end{equation}
\begin{equation}
y_k^{(2k)} = \frac{1}{s_{n-2k}}\,.
\label{firstGeneralFinalValueForY}
\end{equation}

\begin{equation}
x_0^{(2k+1)} = 0,\; \; x_1^{(2k+1)} = y_0^{(2k)}\,.
\label{secondGeneralStartingValueForX}
\end{equation}
\begin{equation}
x_{j+1}^{(2k+1)} = x_{j}^{(2k+1)} + \left [ 1+s_{n-2k-1} \,x_{j}^{(2k)} \right ]^2 \left [ y_{j}^{(2k)}-y_{j-1}^{(2k)}\right ],\; 1 \le j \le k\,.
\label{secondGeneralFormulaForX}
\end{equation}
\begin{equation}
y_0^{(2k+1)} = 0\,.
\label{secondGeneralStartingValueForY}
\end{equation}
\begin{equation}
y_j^{(2k+1)} = \frac{x_{j}^{(2k)}}{1+s_{n-2k-1}\,x_{j}^{(2k)}}\,,\; 1 \le j \le k\,.
\label{secondGeneralFormulaForY}
\end{equation}
\begin{equation}
y_{k+1}^{(2k+1)} = \frac{1}{s_{n-2k-1}}\,.
\label{secondGeneralFinalValueForY}
\end{equation}

The computation terminates when the superscript reaches the value $n$.

\section{Diffusion processes}
\label{diffusionSection}

In this section, we describe in broad terms the probabilistic version of Krein's inverse problem expounded by Knight in \cite{Kn}, and illustrate
our inversion algorithm by means of an example taken from \cite{DY1}. The reader unfamiliar with diffusion processes will find a useful summary of
the theory in \cite{BS}.

Consider a one-dimensional diffusion process $X$ started at the origin,
with values in some interval $I \subseteq {\mathbb R}_+$. Suppose that the origin is instantaneously reflecting and that $X$ is in natural scale. Such a process is
completely determined by its infinitesimal generator, i.e. by the differential operator
\begin{equation}
{\mathscr G} := \frac{1}{M'(x)} \frac{d^2}{d x^2}
\label{infinitesimalGenerator}
\end{equation}
acting  on a suitable set of functions. Here, $d M$ is the speed measure, and the domain of ${\mathscr G}$
consists of twice differentiable functions $f :\,I \rightarrow {\mathbb R}_+$ satisfying the condition $f'(0+) =0$; an additional condition may be
imposed on $f$ at the right boundary, depending on the behaviour of the diffusion there. The process
$$
\lim_{\varepsilon \rightarrow 0+} \frac{1}{\varepsilon M'(\varepsilon)} \int_0^t {\mathbf 1}_{[0,\varepsilon)} \left ( X_{u} \right )\, d u 
$$
is called the {\em local time} (at the origin); it measures the time spent by the diffusion $X$ in the vicinity of the origin, up to time $t$.
Its right-continuous inverse
$$
\tau_t := \inf \left \{ s :\, \lim_{\varepsilon \rightarrow 0+} \frac{1}{\varepsilon M'(\varepsilon)} \int_0^s {\mathbf 1}_{[0,\varepsilon)} \left ( X_{u} \right )\, d u  > t \right \}
$$
is called the {\em inverse local time} (at the origin); it 
is a positive, non-decreasing process that jumps at the times when $X$ begins an excursion away from the origin. The height of the jump is the {\em length} of the excursion, i.e. the time that elapses before $X$ returns to its starting point. It may be shown that the inverse local time is in fact a L\'{e}vy process.
From the theory of such processes, one deduces the existence of a {\em L\'{e}vy exponent}
$\Theta (\lambda)$ defined implicitly by
\begin{equation}
{\mathbb E} \left ( - \lambda \tau_t \right ) = \exp \left [ - t \Theta (\lambda) \right ]\;\;\mbox{for $\lambda > 0$}\,.
\label{levyExponent}
\end{equation}
Furthermore, the L\'{e}vy exponent is necessarily of the form
\begin{equation}
\Theta (\lambda) = a \lambda + b + \int_0^\infty \left ( 1 - \e^{-\lambda y} \right )\, d \nu (y)
\label{levyKhintchineFormula}
\end{equation}
for some numbers $a,\,b \ge 0$ and some {\em L\'{e}vy measure} $d \nu$, i.e. a measure such that
$$
\int_0^\infty \min \{ 1,\, y \} \, d \nu (y) < \infty\,.
$$
Knight shows that, if $X$ is a so-called ``gap diffusion'', then the function $M$ appearing in Equation (\ref{infinitesimalGenerator}) may be identified uniquely with a string, still denoted $M$. The L\'{e}vy exponent $\Theta$ is related to the characteristic function $W$ of the string via
\begin{equation}
\Theta (\lambda) = \frac{1}{W(-\lambda)} = \lambda W^\ast (-\lambda)\,.  
\label{spectralCorrespondence1}
\end{equation}
Furthermore
\begin{equation}
a = c^\ast, \;\; b = s^\ast \;\;\mbox{and}\;\;\nu'(y) = \int_0^\infty \zeta \e^{-\zeta y} d \sigma^\ast(\zeta)\,.
\label{spectralCorrespondence2}
\end{equation}
Krein's inverse problem may thus be rephrased as: ``Find the gap diffusion, given the L\'{e}vy exponent of the inverse local time''. 

To give an example,
let the L\'{e}vy exponent be given by Equation (\ref{levyKhintchineFormula}) with $a = b = 0$ and
\begin{equation}
\nu'(y) := C\, \frac{\e^{- \beta y}}{y^{1+\alpha}}\,, \;\; \alpha \in (0,1)\,, \;\; \beta > 0\,.
\label{donatiLevyMeasure}
\end{equation}
Donati--Martin and Yor showed that the corresponding diffusion is, up to a homeomorphism, a Bessel process with drift \cite{DY1,Wat}. 
The corresponding string may be expressed in terms of the modified Bessel functions.
Set
\begin{equation}
W(z) := \frac{1}{\Theta(-z)} = \frac{\alpha/\gamma}{\left ( 1- z/\beta \right )^\alpha-1}
\label{besselWithDriftCharacteristicFunction}
\end{equation}
where
$$
\gamma  := C\,\Gamma (1-\alpha)\,\beta^\alpha\,.
$$
By using the well-known continued fraction expansion for the binomial (see for instance \cite{Wal}, p. 343), it is easy to see
that $W$ is expressible in the form (\ref{katsContinuedFraction}) with
\begin{equation*}
s_{2j} = \frac{\beta}{\gamma}\, \frac{(1-\alpha)_{j-1}}{(1+\alpha)_{{j-1}}}\, (2 j+1) 
\;\;\mbox{and}\;\;
s_{2j+1} = 2 \gamma \, \frac{(1+\alpha)_{j-1}}{(1-\alpha)_j}
\end{equation*}
where $(\cdot)_n$ denotes Pochhammer's symbol and we use the convention $(\cdot)_n=1$ for $n<0$. Stieltjes' criterion
(\ref{stieltjesSeries}) implies that this continued fraction converges. Hence $W$ belongs to the determinate
subclass of ${\mathbb W}^-$. In what follows, we find approximations of the string $M$ by truncating the continued fraction
after $n$ terms, and computing the corresponding discrete string by the algorithm of the previous section.

\begin{example}
Take
$$
\alpha = \frac{1}{2}, \;\; \beta = 2 \;\;\mbox{and}\;\; C = \frac{1}{\sqrt{2\pi}}\,.
$$
Then
\begin{equation}
M(x) = \frac{2 x}{1+4 x}
\label{brownianMotionWithDrift}
\end{equation}
and the diffusion is a Brownian motion with drift $-2$ in natural scale \cite{BS}. The dots in 
Figure \ref{brownianWithDriftFigure} are the points $(x_j^{(n)},y_j^{(n)})$ such that $x_j^n < 5$ corresponding
to the discrete string $M[s_0,\,\ldots,\,s_n]$ constructed by the algorithm of \S \ref{inverseSection}. 
The superimposed continuous curve is a plot of $M$.
\begin{figure}[htbp]
\vspace{8cm}  
\includegraphics{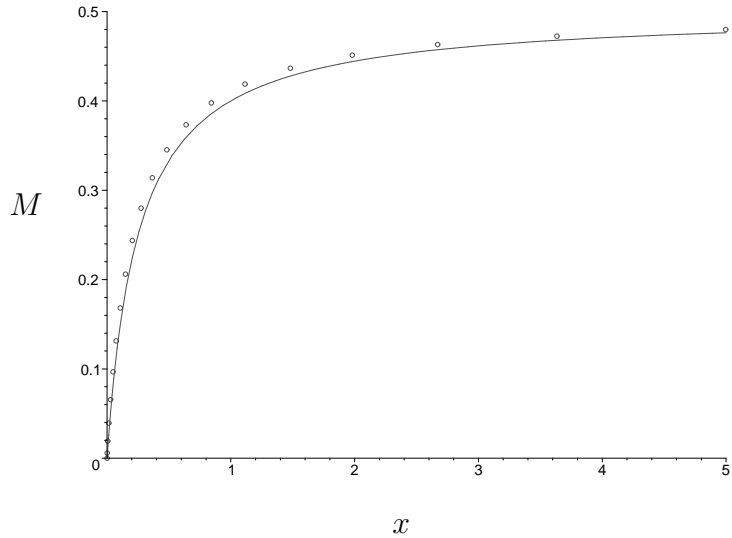} 
\begin{picture}(0,0)
\put(220,0){$x$}
\put(75,120){$M$}
\end{picture}
\caption{The points $(x_n^{(j)},y_n^{(j)})$, shown as dots, corresponding to the approximation $M[s_0,\,\ldots,\,s_n]$ 
of the string $M$ defined by Equation (\ref{brownianMotionWithDrift}). Here, $n=511$ and the solid curve is a plot
of the true $M$.} 
\label{brownianWithDriftFigure} 
\end{figure}
\label{brownianMotionWithDriftExample}
\end{example}

\begin{example}
By taking
$$
C = \frac{1}{2 \Gamma (1-\alpha) \beta^\alpha}
$$
and letting $\alpha \rightarrow 0+$, we obtain
\begin{equation}
W(z) = \frac{2}{\ln \left ( 1- z/\beta \right )}\,.
\label{logarithmicFunction}
\end{equation}
The string with this characteristic function was found explicitly by Donati-Martin and Yor \cite{DY1}: let $Y$ be the ${\mathbb R}_+$-valued diffusion process,
reflected at the origin,
with infinitesimal generator
$$
{\mathscr G}_Y := \frac{1}{2} \frac{d^2}{d y^2} + \left [ \frac{1}{2y} + \sqrt{2 \beta} \frac{K_0 ' \left ( \sqrt{2 \beta} y \right )}{K_0 \left ( \sqrt{2 \beta} y \right )} \right ] \frac{d}{dy}\,.
$$
The scale function of $Y$ is 
$$
S(y) := \int_0^y \frac{dt}{t K_0^2 \left ( \sqrt{2 \beta} t \right )}\,.
$$
The diffusion
$$
X = S(Y)
$$
is in natural scale and its generator is given by Equation (\ref{infinitesimalGenerator}), where $M$ is the string corresponding to
(\ref{logarithmicFunction}). Figure \ref{besselWithDriftFigure} shows a plot of the string for $x$ small, together with an approximation obtained by our algorithm.
\begin{figure}[htbp]
\vspace{8cm}  
\includegraphics{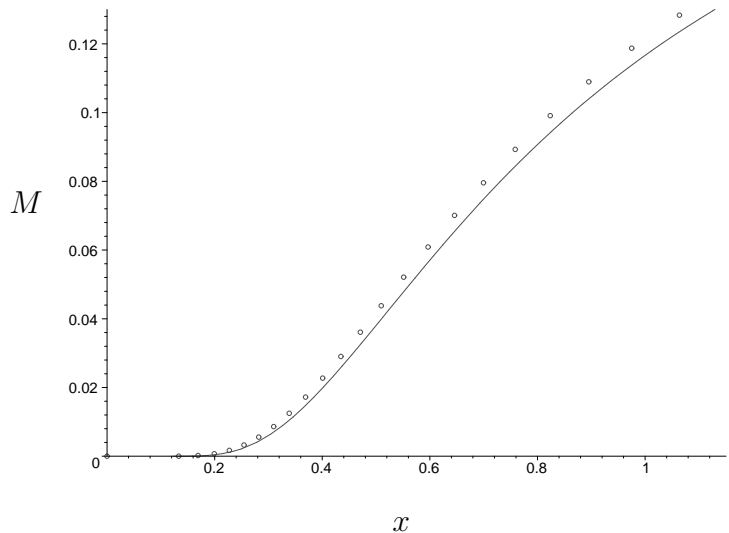} 
\begin{picture}(0,0)
\put(220,0){$x$}
\put(75,120){$M$}
\end{picture}
\caption{The points $(x_n^{(j)},y_n^{(j)})$, shown as dots, corresponding to the approximation $M[s_0,\,\ldots,\,s_n]$ 
of the string $M$ whose characteristic function is given by Equation (\ref{logarithmicFunction}).  $\beta=2$, $n=2047$ and the solid curve is a plot
of the true $M$.} 
\label{besselWithDriftFigure} 
\end{figure}
\label{alphaIsZeroExample}
\end{example}

\section{Concluding remarks}
\label{conclusionSection}

We end with some comments on the numerical issues arising from the algorithm: by design, the approximation is {\em exact} in the limit $x \rightarrow \infty$.
More generally,
our computations suggest that
$$
\max_{j} \left | y_n^{(j)} - M \left ( x_n^{(j)} \right ) \right | = O \left ( n^{-1/2} \right ) \;\;\mbox{as $n \rightarrow \infty$}\,.
$$
There is also some numerical evidence that greater accuracy may be obtained by averaging at the jumps, namely
$$
\max_{j} \left | y_n^{(j-\frac{1}{2})} - M \left ( x_n^{(j)} \right ) \right | = O \left ( n^{-1} \right ) \;\;\mbox{as $n \rightarrow \infty$}
$$
where
$$
y_n^{(j-\frac{1}{2})} := \frac{y_n^{(j-1)}+y_n^{(j)}}{2}\,.
$$

As one would expect,
in order to compute the discrete string from the truncated continued fraction, it is necessary to have good approximations of the coefficients.
It will seldom be the case that exact formulae are available. Instead, one will need
to compute these coefficients by using some numerical algorithm; see for instance \cite{CT} and the comments therein.

Finally, the inversion method we have described generates only piecewise constant approximations of $M$, and so the recovery of $M'$--- 
in cases where the string is absolutely continuous---
is not entirely straightforward. It would be of interest to adapt to our case the ingenious recovery technique
that Borcea \etal \cite{BDK} devised in the context of the Sturm--Liouville problem in impedance form.

\ack It is a pleasure to thank Alain Comtet for his encouragement and for discussions of the material, and the Laboratoire de Physique Th\'{e}orique et
Mod\`{e}les Statistiques, Universit\'{e} Paris-Sud, for its hospitality while some of this work was carried out. Thanks also to the anonymous referees,
whose thoughtful comments and suggestions helped improve an earlier version of the paper.

\bigskip

\end{document}